\documentclass[12pt,reqno]{amsart}
\usepackage{amssymb}
\usepackage{amsmath, mathtools}
\usepackage{amsthm}
\usepackage{amscd}

\newcommand{\RNum}[1]{\uppercase\expandafter{\romannumeral #1\relax}}

\usepackage[T2A]{fontenc}
\usepackage[utf8]{inputenc}
\usepackage[english]{babel}

\input{int.def} 

\usepackage[sort]{cite}
\usepackage{tikz-cd}
\usetikzlibrary{cd}
\usepackage{dirtytalk}
\usepackage[linktoc=page, colorlinks, linkcolor=blue, citecolor=blue]{hyperref}

\usepackage{xcolor}
\usepackage{centernot}

\usepackage{enumitem}

\usepackage{pgfplots}
\usepackage{multicol}

\pgfplotsset{compat=1.17}
\numberwithin{equation}{section}

\DeclarePairedDelimiter \abs{\lvert}{\rvert} 
\DeclarePairedDelimiter \norm{\lVert}{\rVert}
\DeclarePairedDelimiterX \ip[2]{\langle}{\rangle}{#1,#2}
\DeclarePairedDelimiterXPP \Prob[1]{\mathbb{P}}\{\}{}{\newcommand\given{\nonscript\:\delimsize\vert\nonscript\:\mathopen{}} #1} 
\DeclarePairedDelimiterXPP \Probevent[1]{\mathbb{P}}(){}{\newcommand\given{\nonscript\:\delimsize\vert\nonscript\:\mathopen{}}#1} 

\DeclareMathOperator{\Bin}{Bin}
\DeclareMathOperator{\disc}{disc}
\DeclareMathOperator{\E}{\mathbb{E}}

\newcommand{\one}{\mathbf{1}}

\def \R {\mathbb{R}}
\def \e {\varepsilon}

\usepackage[mathcal]{euscript}

\usepackage{geometry}
\newgeometry{vmargin={25mm}, hmargin={22mm,22mm}, footskip=10mm}   

\begin{document}
\title{Random sets are close to low-discrepancy sets}

\thanks{R.V. acknowledges support from NSF Grant DMS-2451011 and U.S. Air Force Grant FA9550-25-1-0294.}

\author{Gleb Smirnov}
\address{
Mathematical Sciences Institute, 
Australian National University, Canberra, Australia}
\email{gleb.smirnov@anu.edu.au}

\author{Roman Vershynin}
\address{Department of Mathematics, University of California, Irvine, US}
\curraddr{}
\email{rvershyn@uci.edu}

\thanks{}


\begin{abstract}
We show that a random sample from an 
arbitrary probability measure on $\R^d$ is close to a low-discrepancy point set. Namely, after moving only a small fraction of the sample points in expectation, one obtains an $n$-point set with star discrepancy \(\operatorname{polylog}(n)/n\) with respect to the original measure.
\end{abstract}

\maketitle

\section{Main results}

A fundamental limitation of random sampling is that its typical estimation error is of order \(n^{-1/2}\). We ask whether this barrier can be overcome by modifying only a small fraction of the sample, assuming the underlying distribution is known. 

\subsection{Empirical processes and star discrepancy}

Let \(X_1,\ldots,X_n\) be iid random variables taking values in \(\R^d\). Let
\(\mu\) denote their common distribution, and let \(\mu_n^X\) denote the empirical
measure:
\[
\mu(B)=\Prob{X_i\in B},
\qquad
\mu_n^X(B)=\frac1n\sum_{i=1}^n\one_{\{X_i\in B\}},
\]
for every Borel set \(B\subset\R^d\).

The classical VC theory (see e.g. \cite[Theorem~8.3.15]{vershynin2026high}) implies that the empirical measure approximates the population measure uniformly over lower orthants:
\begin{equation}\label{eq:VC}
\E \sup_B \abs*{\mu_n^X(B)-\mu(B)}
\le
C\sqrt{\frac dn}.
\end{equation}
Here the supremum is over all lower orthants
\begin{equation}	\label{eq: lower orthant}
	B=(-\infty,t_1]\times\cdots\times(-\infty,t_d],
\qquad
(t_1,\ldots,t_d)\in\R^d.
\end{equation}

The quantity \(\sup_B \abs*{\mu_n^X(B)-\mu(B)}\) has two classical interpretations, one in probability and the other in discrete geometry. For random points \(X_i\), it is the supremum of the empirical process indexed by
lower orthants; see
\cite{Stute,B-Kiefer-Ros,Kiefer-W,Revesz,Massart}. For deterministic point sets,
it is precisely the star discrepancy.

It is known that there exist \(n\)-point sets with star discrepancy as low as 
\(\operatorname{polylog}(n)/n\); see
\cite{Mat,Chaz,Beck-Chen,Aist-Dick,Aist-B-Nik}. This is substantially smaller
than the \(n^{-1/2}\) bound in \eqref{eq:VC}, showing that low-discrepancy point
sets are considerably more regular than random samples.

\subsection{Main result}

Our main result shows that a random sample is close to a low-discrepancy point set. By modifying only a prescribed expected number of
sample points, one can reduce its discrepancy to nearly the optimal order.

\begin{theorem}[A random sample is close to a low-discrepancy set]\label{thm_main}
Let \(X_1,\ldots,X_n\) be iid random variables taking values in \(\R^d\), and let
\(\mu\) denote their common distribution. For every \(m\in(0,n]\), there exists a
randomized algorithm, possibly depending on \(\mu\), that maps points
\(X_1,\ldots,X_n\) to points \(Y_1,\ldots,Y_n\in\R^d\) such that
\[
\E\,\#\{i:Y_i\neq X_i\}\le m,
\]
and
\[
\E\sup_B\abs*{\mu_n^Y(B)-\mu(B)}
\le \frac{C^d \log^{3d/2+1}(2n)}{m}.
\]
Here \(\mu_n^Y\) is the empirical measure of \(Y_1,\ldots,Y_n\), the supremum is
over all lower orthants \(B\), and \(C_d>0\) depends only on \(d\).
The expectations are taken over both the sample and the internal randomness of
the algorithm.
\end{theorem}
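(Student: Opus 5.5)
Our plan is to reduce to the uniform measure on $[0,1]^d$ and then use a dyadic, hierarchical rebalancing scheme in which only a random fraction of the points is allowed to move.

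\textbf{Reduction.} We first reduce to $\mu$ being the uniform (Lebesgue) measure on $[0,1]^d$. Given a measurable transport map $T:[0,1]^d\to\R^d$ (for instance, built from the Rosenblatt/Knothe conditional quantile transforms) that pushes Lebesgue measure to $\mu$ and is monotone coordinatewise in the triangular sense, we want $\sup_B|\mu^{T(U)}_n(B)-\mu(B)|$ to be controlled by a discrepancy of the $U_i$ over a suitable class of sets. Atoms of $\mu$ are handled by adding auxiliary uniform randomization, which is the reason the algorithm is randomized and depends on $\mu$. The preimage $T^{-1}(B)$ of a lower orthant is generally not an orthant. We therefore plan to avoid transporting orthants and instead run the dyadic construction below directly on $\mu$, via iterated conditional medians: split $\R^d$ into two $\mu$-halves along coordinate $1$, then each half along coordinate $2$, and so on cyclically. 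This yields a tree of depth $L\approx\log_2 n$ whose cells at level $\ell$ have $\mu$-mass $2^{-\ell}$, with atoms split via randomization. Every lower orthant can be decomposed into $O(\log^{d}(2n))$ disjoint such cells plus a boundary error. This is a standard fact for dyadic boxes; in the conditional, kd-tree version one has to check that the decomposition count stays polylogarithmic. This is one technical point to verify.

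\textbf{Rebalancing.} For each cell $Q$ in the tree, the count $N(Q)=\#\{i:X_i\in Q\}$ deviates from $n\mu(Q)$ by about $\sqrt{n\mu(Q)}$. The algorithm proceeds top-down. At each node it moves surplus points from the overfull child to the underfull child, re-sampling the moved points from $\mu$ conditioned on the receiving child (this preserves conditional uniformity). The parity/rounding residual at each node is at most $1$. After equalization every leaf has count within $O(L)$ of $n\mu(Q)$, so every orthant has error $O(\log^{d}\cdot L)/n$. Moving all surplus costs $\sum_\ell\sum_{Q}\sqrt{n2^{-\ell}}\approx n\cdot$polylog points, which is too many. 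To respect the budget $m$, we only fully balance at levels $\ell$ with $n2^{-\ell}\ge$ some threshold $k$. At the coarse levels $\ell\le\ell_0$, the cost is $\sum_{\ell\le \ell_0}2^{\ell}\sqrt{n2^{-\ell}}\approx\sqrt{n2^{\ell_0}}$. Cells below level $\ell_0$ are left random, so they contribute error about $\sqrt{n2^{-\ell_0}}$ per cell in count. An orthant meets only $O(\log^{d-1})$ boundary cells per level, which gives total error about $\log^{d-1}(2n)\sqrt{n2^{-\ell_0}}/n$. Choosing $2^{\ell_0}\approx m^2/n$ gives cost $\approx m$ and error $\approx\log^{d-1}(2n)/m$, up to logarithmic factors.

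\textbf{Main obstacle.} The hard part is the uniform-in-orthant control of the boundary cells, together with the expectation bound. Within the fine, unbalanced levels, the error along an orthant boundary is a sum over $O(\log^{d-1})$ cells per level of independent-ish centered fluctuations. Conditional on the balanced counts, the points in each cell are iid from $\mu|_Q$, since the re-sampling preserves this. So the fluctuation of $B\cap Q$ is that of a binomial/empirical process inside $Q$, and we can apply \eqref{eq:VC} within each cell, or better a chaining over the tree with a union bound over the $n^{O(d)}$ combinatorially distinct orthants. This supremum, together with the logarithmic losses from the tree decomposition, should account for the $\log^{3d/2+1}(2n)$ factor and the $C^d$ constant. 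The case $m\le\sqrt n$ is trivial from \eqref{eq:VC}, since then $\sqrt{d/n}\lesssim C^d/m$, so we may assume $m\ge\sqrt n$, which makes the choice of $\ell_0$ valid.
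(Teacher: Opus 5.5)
There is a genuine gap, and it sits in the step you flagged as a ``technical point to verify''. For $d\ge 2$ it is false that an orthant splits into $O(\log^d(2n))$ cells of your tree plus a boundary meeting only $O(\log^{d-1}(2n))$ cells per level. The standard $O(L^d)$ decomposition of an anchored box uses dyadic rectangles of all aspect ratios, e.g.\ $[0,\tfrac12]\times[0,2^{-L}]$, and these do not form a single nested partition. In a kd-tree the level-$\ell$ cells are roughly cubes of side $2^{-\ell/d}$, so an orthant boundary cuts about $2^{\ell(d-1)/d}$ of them. This is not an artifact of the kd-tree. For uniform $\mu$, any partition into $M$ cells of volume $1/M$ has, by AM--GM applied to the coordinate projections of each cell, some hyperplane section $\{x_k=t\}$ meeting $\gtrsim M^{(d-1)/d}$ cells.

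Consequently your claim that full equalization gives error $O(\log^d(2n)\,L)/n$ fails. Balancing down to cells with about one point each is just stratified (jittered) sampling. Its star discrepancy is of order $n^{-1/2-1/(2d)}$ up to logarithms, because the $\approx n^{(d-1)/d}$ cut leaves contribute independent fluctuations of order one. With your truncation $2^{\ell_0}\approx m^2/n$, the same count gives normalized error about $n^{-1}\sqrt{n\,2^{-\ell_0/d}}$. Even for $m=n$ this is $n^{-1/2-1/(2d)}$, not $\operatorname{polylog}(n)/n$. For $d=1$ your scheme does work, consistent with \cite{Sm-Versh-1}.

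The missing idea is that all dyadic rectangles of all shapes must be balanced simultaneously. They form an $L^d$-sparse family of overlapping partitions. Moving points to balance one partition unbalances the others, so no greedy tree-by-tree rebalancing can succeed; this is a genuine vector-balancing problem.

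The paper handles it with the partial coloring result (Proposition~\ref{fisher}) and its full-coloring form (Lemma~\ref{full_coloring}), applied to the incidence vectors of the sparse family. The signed sum is bounded in $\ell_\infty$ at the cost of only a $\sqrt{s}=L^{d/2}$ factor, while each prescribed random sign is flipped with probability at most $\pi/T$.

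To turn coloring into sampling without moving many points, the paper realizes $X_1,\dots,X_n$ as a random subsample of a large iid sample $Z_1,\dots,Z_N$ with $N=n2^k$. It encodes membership in $S$ by $k$ independent fair ``fate'' signs per index and halves $k$ times (Proposition~\ref{summary_vectors}). With $T=2\pi k/\delta$, each index of $S$ survives all rounds with conditional probability at least $(1-\delta/k)^k\ge 1-\delta$, which gives $\E\abs{Q\cap S}\ge n(1-\delta)$. Passing from dyadic rectangles to arbitrary orthants then needs only uniform marginals (Lemma~\ref{lem:uniformization}) to control the $d$ thin boundary slices, not a tree adapted to $\mu$.
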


In particular, if we choose $m=0.01n$, Theorem~\ref{thm_main} implies that moving just $1\%$ of the sample points can reduce the star discrepancy to $\operatorname{polylog}(n)/n$.

Although the exponent of the logarithm in Theorem~\ref{thm_main} is certainly not optimal, the rate $1/m$ is optimal even in dimension $d=1$ \cite{Sm-Versh-1}.

\subsection{Prior work}
    Several partial cases of Theorem~\ref{thm_main} are known. The case $m=n$ corresponds to finding any $n$-point set in $\R^d$ with small star discrepancy, and the best known constructions give $O_d (\log^{d-1/2}(2n)/n)$ \cite{Aist-B-Nik}.
    In dimension \(d=1\), Theorem~\ref{thm_main} was proved in \cite{Sm-Versh-1} with the optimal bound $O(1/m)$. Moreover, still in dimension $d=1$, it is known how to {\em remove} (rather than move) $m$ points to achieve discrepancy $O(\log(2n)/m)$, which is optimal if only removals are allowed \cite{Bilyk-St}. Perhaps the closest result to ours is \cite{DFGGR}, which proves a version of Theorem~\ref{thm_main} in the particular case of the uniform measure on the unit cube $[0,1]^d$. They show how to remove $m$ points to achieve star discrepancy $O(\log^{2d}(2n)/m)$. The proof in \cite{DFGGR} relies on the orthogonality of Haar wavelets, and is therefore specific to the product structure of the measure $\mu$. Our argument is completely different and it applies to arbitrary Borel measures.

\subsection{Sparse families}

The star discrepancy measures the uniform approximation over a specific family of sets -- the lower orthants in $\R^d$. Let's now be more ambitious and try to consider general families of subsets. 

We will now state an abstract result, from which Theorem~\ref{thm_main} will be deduced in \S\,\ref{proof_thm_main} by standard tricks -- VC theory, dyadic decomposition, and approximation.

A family \(A_1,\ldots,A_D\) of subsets of $[N]=\{1,\ldots,N\}$ is called \emph{\(s\)-sparse} if each point is covered by at most $s$ subsets, i.e.
\[ 
\#\{j\,:\, i \in A_j\}\le s \quad\text{for each } i \in [N]. 
\]

\begin{theorem}[Sparse families]\label{thm_sparse}
Fix \(0 < \delta < 1\). Let \(A_1,\ldots,A_D\) be an \(s\)-sparse family of subsets of $[N]$. Assume that $N$ has the form $N=n2^k$ for some positive integers $n$ and $k$. Let \(S \subset [N]\) be a random \(n\)-element subset, chosen uniformly without replacement. Then there is a randomized algorithm that produces an \(n\)-element subset \(Q \subset [N]\) such that
$$
\E \abs{Q \cap S} \ge n (1 - \delta) - 2 \sqrt{n}
$$
and
$$
\abs*{ \frac{\abs{A_j \cap Q}}{n} - \frac{\abs{A_j}}{N}} 
\le 
C \frac{\sqrt{s}}{n} \left( \frac{k}{\delta} + \log(2n) \right)
\qquad \text{for each } j.
$$
Here \(C > 0\) is an absolute constant. The expectation is taken over both the sample $S$ and the internal randomness of the algorithm.\end{theorem}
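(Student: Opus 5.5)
The plan is an iterative halving procedure, driven by a discrepancy-minimizing random coloring. Put $P_0=[N]$. For $\ell=0,1,\ldots,k-1$ we will split the current set $P_\ell$, of size $n2^{k-\ell}$, into two equal halves and keep one of them as $P_{\ell+1}$. After $k$ steps we set $Q=P_k$, which has exactly $n$ elements. At each step two requirements must hold.

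\begin{enumerate}[label=(\roman*)]
\item The discarded half should contain few points of $S$. Over the $k$ steps we want to lose in total at most about a $\delta$-fraction of $S$ in expectation, so roughly $\delta/k$ per step.
\item For every $j$, the count $\abs{A_j\cap P_{\ell+1}}$ should be close to $\tfrac12\abs{A_j\cap P_\ell}$.
\end{enumerate}

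If at step $\ell$ the count error is $E_\ell$, it gets halved $k-\ell-1$ more times, so its contribution to $\abs{A_j\cap Q}-\abs{A_j}/2^k$ is $E_\ell 2^{-(k-\ell-1)}$. Dividing by $n$ gives the normalized error in the statement. The target bound $C\sqrt{s}\,(k/\delta+\log 2n)/n$ therefore suggests the following error budget:
\begin{itemize}
\item the halving noise itself costs $O(\sqrt{s})$ per step with subgaussian tails;
\item a union bound over the at most $Ns/\abs{A_j}$-type relevant sets, together with the geometric decay, produces the $\log(2n)$ term;
\item the bias we introduce to protect $S$ costs $O(\sqrt{s}/\delta)$ per step, and summing over the $k$ steps gives $\sqrt{s}\,k/\delta$.
\end{itemize}

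For a single halving step I would use the Gram--Schmidt walk of Bansal--Dadush--Garg--Lovett, run from a prescribed fractional starting point $z\in[-1,1]^{P_\ell}$. It outputs a random coloring $x\in\{\pm1\}^{P_\ell}$ with $\E x=z$. Moreover, for every $j$ the quantity $\sum_{i\in A_j}(x_i-z_i)$ is $O(\sqrt{s})$-subgaussian, because the column vectors $\one_{\{i\in A_j\}}$ have Euclidean norm at most $\sqrt{s}$ by $s$-sparsity. The balance constraint $\sum_i x_i=0$ is added as an extra row; alternatively we pair the points and color the pairs, which keeps the halves exactly equal. The kept half is $\{x_i=+1\}$.

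The starting point $z$ is chosen to be biased toward $S$. We take $z_i=\eta$ for $i\in S\cap P_\ell$, and $z_i=-\eta'$ on the rest of $P_\ell$, with $\eta'$ fixed by $\sum z_i=0$. Then each point of $S$ survives the step with probability $(1+\eta)/2$. The systematic error this creates on $A_j$ is $\sum_{i\in A_j}z_i$, which equals $\eta$ times the centered count of $S$ in $A_j\cap P_\ell$. Since $S$ is a uniform random subset, this centered count is a sampling fluctuation. The $-2\sqrt{n}$ term in the conclusion should come from exactly this kind of fluctuation, namely the deviation of $\abs{S\cap P_\ell}$ from its mean.

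The main obstacle is reconciling high retention of $S$ with small bias. A naive bias of size $\eta$ keeps only about half of $S$ per step, which is far from a loss of $\delta/k$. So the retention of $S$ cannot come from the bias of one coloring alone. The first thing I would try is a mixture: at each step, with probability $1-\delta/k$, we deterministically keep a pre-chosen half $H\supset S\cap P_\ell$, where $H$ is built by pairing each $S$-point with a non-$S$ point so that the pairing is uniform. Because $S$ is a uniform random sample, $S$ is then distributed like a uniform subset of this half, so keeping $H$ does not bias the counts in expectation. With probability $\delta/k$ we instead rerun the Gram--Schmidt walk to correct the accumulated discrepancy, at amplified scale $k/\delta$. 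This is how I expect the factor $\sqrt{s}\,k/\delta$ to arise. Making this mixture rigorous is where the real work lies, since the correction must control errors accumulated over many deterministic steps while preserving the exact uniformity of $S$ inside the current set.
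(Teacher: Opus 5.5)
\textbf{There is a genuine gap.} The proposal does not get past the obstacle you identify, and the mixture you suggest as a fix provably fails.

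\emph{The mixture outputs $Q=S$ most of the time.} A correction occurs with probability $\delta/k$ per round. So with probability at least $(1-\delta/k)^k\ge 1-\delta$, independently of $S$, you keep a half $H\supset S\cap P_\ell$ in every round. Then $S\subset P_\ell$ for all $\ell$, and since $|P_k|=n=|S|$, the output is $Q=S$. But the discrepancy bound must hold for every outcome. Already for $D=s=1$ and $|A_1|=N/2$, a uniform $n$-subset typically has $\bigl||A_1\cap S|/n-\tfrac12\bigr|\asymp n^{-1/2}$. This is far above $C(k/\delta+\log 2n)/n$ once $n\gg(k/\delta+\log 2n)^2$. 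Amplifying a rare correction by $k/\delta$ can at best make the error unbiased, never uniformly small.

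\emph{Tuning $\eta$ cannot rescue the Gram--Schmidt route.} The walk pins the marginals, $\Prob{x_i=1}=(1+z_i)/2$, and your $z$ depends only on membership in $S$. For instance, with a fixed $\eta$ one gets exactly
$\E\bigl[\,|A_j\cap S\cap Q|\,\big|\,S\bigr]=\bigl(\tfrac{1+\eta}{2}\bigr)^k|A_j\cap S|$,
whether or not $S$ over-represents $A_j$. For $\eta$ near $1$, the sampling excess of order $\sqrt n$ essentially survives into $Q$. For small $\eta$, you lose half of $S$ per round. This is exactly the dilemma you noticed. Any fix must let the decision to drop an $S$-point depend on the discrepancy created by the other points. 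Separately, subgaussian increments plus a union bound bring in a $\sqrt{\log D}$ factor that the statement does not have.

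\emph{The missing idea.} Make the randomness of $S$ itself the input of the coloring, and use a discrepancy algorithm that adaptively changes only a few of its random input signs.
\begin{enumerate}
\item Give each $i$ independent fair fates $\varepsilon_i(1),\dots,\varepsilon_i(k)$ and set $S=\{i:\varepsilon_i(r)=+1\ \forall r\}$. This is Bernoulli sampling at rate $2^{-k}$, coupled to a uniform $n$-set at cost $\E|S\triangle S'|\le\sqrt n$.
\item In round $r$, apply Lemma~\ref{full_coloring} to the surviving incidence vectors, with $A_0=[N]$ adjoined and everything scaled by $\sqrt{2s}$, using inputs $\varepsilon_i(r)$.
\item That lemma rests on the partial coloring of Proposition~\ref{fisher}. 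It deterministically keeps all prefix sums in the $\ell_\infty$-ball of radius $T$, with no dependence on $D$, while discarding $v_i$ with probability at most $\pi\|v_i\|_2/T$. It returns signs $\varepsilon'_i(r)$ with $\|\sum\varepsilon'_i(r)v_i\|_\infty\le T+C\log(2m)$ and $\Prob{\varepsilon'_i(r)\ne\varepsilon_i(r)}\le\pi/T$.
\item Take $T=2\pi k/\delta$. By fairness of $\varepsilon_i(r)$, an index with input $+1$ is flipped with conditional probability at most $\delta/k$. Hence $\Prob{i\in Q\cap S}\ge 2^{-k}(1-\delta)$.
\item Telescoping the rounds with weights $2^{r-1}$ gives discrepancy $(T+C\log 2N)/n$. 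This is where the $k/\delta$ term comes from.
\item The halves are only approximately balanced. The size is corrected to exactly $n$ at the end using the $A_0$ row, removing non-$S$ points first.
\end{enumerate}
The $-2\sqrt n$ comes from two sources, each costing $\sqrt n$ via $\E|\xi-n|\le\sqrt n$ for $\xi\sim\Bin(N,2^{-k})$: the final size trimming and the Bernoulli/fixed-size coupling.
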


\subsection{A word on the proof}
In \S\,\ref{fates} and \S\,\ref{proof_thm_sparse}, we deduce Theorem~\ref{thm_sparse} by combining our earlier work on two-color discrepancy \cite{Versh-Sm-3, Sm-Versh-2} with some randomization of Beck's transference principle \cite{Beck, Aist-B-Nik}. The randomized transference principle is developed in the proof of Proposition~\ref{summary_vectors}; we believe this simple argument can be useful in many other applications. 

Then, in \S\,\ref{proof_thm_main}, we deduce Theorem~\ref{thm_main} by approximation based on VC theory.

\section{Iterated thinning}\label{fates}

The proof of Theorem~\ref{thm_sparse} rests on an iterated thinning procedure. We first record the one-step input.

\begin{proposition}[Partial coloring \cite{Versh-Sm-3}]\label{fisher}
	Let $v_1,\ldots,v_m\in\R^D$ be any vectors, and let $\varepsilon_1,\ldots,\varepsilon_m$ be independent fair signs.  For every $T>0$ there is a randomized algorithm that discards some of the terms $\varepsilon_i v_i$ so that
	\begin{equation}\label{in_cube}
		\max_{r\le m} \norm*{\sum_{i\le r\,\textrm{accepted}}\varepsilon_i v_i}_\infty\le T,
	\end{equation}
	and
	\begin{equation}\label{discard_proba}
		\Prob{v_i\text{ is discarded}}\le \frac{\pi}{T}\norm{v_i}_2
		\qquad \text{for each } i.
	\end{equation}
\end{proposition}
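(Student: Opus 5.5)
The plan is to realize the discarding rule as a Metropolis-type random walk whose stationary law is supported in the cube $[-T,T]^D$ and is translation-robust in total variation. Throughout, $W_r$ denotes the running accepted sum $\sum_{i\le r\text{ accepted}}\varepsilon_i v_i$.

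\emph{Step 1: the offset.} Since $W_0=0$ is not stationary, I will use a random offset. Draw $U$ from a density $p$ supported in $[-T/2,T/2]^D$, independently of the signs, and run the walk $Z_r=U+W_r$. At step $i$, propose $Z_{i-1}+\varepsilon_i v_i$ and accept with probability $\min\{1,\,p(Z_{i-1}+\varepsilon_i v_i)/p(Z_{i-1})\}$; otherwise $v_i$ is discarded.

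\emph{Step 2: invariance and the cube bound.} The proposal $\pm v_i$ is symmetric because the sign is fair, and $p$ is the target, so detailed balance holds and $Z_r\sim p$ for every $r$. Since $p$ is supported in $[-T/2,T/2]^D$, every $Z_r$ stays there, and hence $\norm{W_r}_\infty\le \norm{Z_r}_\infty+\norm{U}_\infty\le T$. This gives \eqref{in_cube} simultaneously for all prefixes $r\le m$.

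\emph{Step 3: the discard probability.} Conditioned on the past, $Z_{i-1}\sim p$, so
\[
\Prob{v_i\text{ is discarded}}=\E\bigl(1-\min\{1,p(Z+\varepsilon v_i)/p(Z)\}\bigr)=\tfrac12\int\abs{p(x+v_i)-p(x)}\,dx .
\]
This equals the total variation distance between $p$ and its translate by $v_i$. It is bounded by $\norm{v_i}_2\sup_{\abs{u}=1}\tfrac12\int\abs{\partial_u p}$.

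\emph{Step 4: choosing $p$, the main obstacle.} It remains to find a density in $[-T/2,T/2]^D$ with $\sup_u\int\abs{\partial_u p}\le 2\pi/T$, independently of $D$. Natural candidates fail. Product densities give only an $\ell_1$ bound: in the worst case $u=(1,\dots,1)/\sqrt D$ one loses a factor $\sqrt D$. Uniform measure on a Euclidean ball also loses $\sqrt D$.

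So a plain stationary density in a small cube is probably not enough, and dimension-free $\ell_2$ control must come from somewhere else. My first attempt would be to let the offset itself be re-randomized: use a fresh coupling at each step rather than one fixed $U$, for instance a Gaussian-type process that is reflected or killed at the cube boundary. I would then bound the discard probability by a one-dimensional computation in the direction $v_i/\norm{v_i}_2$; the constant $\pi$ suggests a sine eigenfunction profile on an interval of length comparable to $T$.

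If this fails, I would fall back on the construction in \cite{Versh-Sm-3}. I expect this final step, obtaining a dimension-free constant with the Euclidean norm, to be the real difficulty. Steps 1–3 are routine.
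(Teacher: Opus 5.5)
Your Steps 1--3 are sound. The paper does not prove this proposition itself; it quotes it from \cite{Versh-Sm-3}, so there is no in-paper argument to match. With $U\sim p$ the Metropolis chain stays stationary, \eqref{in_cube} holds deterministically for all prefixes, and the discard probability equals $\frac12\int\abs{p(x+v_i)-p(x)}\,dx\le\frac12\norm{v_i}_2\sup_{\abs{u}=1}\int\abs{\partial_u p}$. One correction to Step 3: what you use is that $Z_{i-1}\sim p$ unconditionally and $\varepsilon_i$ is independent of $Z_{i-1}$; conditioned on the past, $Z_{i-1}$ is fixed.

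The gap is Step 4, which is the entire content of the proposition. You never exhibit the required density, and your reason for discarding product densities is wrong. The $\ell_1$ loss comes from the triangle inequality $\int\abs{\sum_j u_j\partial_j p}\le\sum_j\abs{u_j}\int\abs{\partial_j p}$. That bound is sharp only when the one-dimensional factor jumps at the boundary, as the uniform density does. Instead, take $p(x)=\prod_j q(x_j)$ with $q$ absolutely continuous on $\R$ and vanishing at $\pm T/2$. Then $\partial_u p=p\sum_j u_j s(x_j)$ with score $s=q'/q$. Under $p$ the variables $s(X_j)$ are independent with mean $\int q'=0$, so
\[
\int\abs{\partial_u p}=\E\abs[\Big]{\sum_j u_j s(X_j)}\le\Bigl(\sum_j u_j^2\,\E s(X_1)^2\Bigr)^{1/2}=\sqrt{I(q)},\qquad I(q)=\int\frac{(q')^2}{q}.
\]
This is dimension-free and depends only on the Euclidean norm of $u$.

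It remains to choose $q$ on $[-T/2,T/2]$ with small Fisher information. Since $I(q)=4\int((\sqrt q)')^2$ with $\sqrt q$ vanishing at the endpoints, this is the Dirichlet eigenvalue problem your sine-eigenfunction remark was pointing toward. Take $q(t)=\frac2T\cos^2(\pi t/T)$ on $[-T/2,T/2]$ and zero outside; then $I(q)=4\pi^2/T^2$. This $p$ is $C^1$ on $\R^D$ and positive exactly on the open cube. Step 3 then gives $\Prob{v_i\text{ is discarded}}\le\frac12\norm{v_i}_2\cdot\frac{2\pi}{T}=\frac{\pi}{T}\norm{v_i}_2$, which is \eqref{discard_proba} with exactly the constant $\pi$. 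An equivalent route: the squared Hellinger distance $H^2(f,g)=\int(\sqrt f-\sqrt g)^2$ tensorizes, giving $H^2\bigl(p,p(\cdot+v)\bigr)\le\sum_j H^2\bigl(q,q(\cdot+v_j)\bigr)\le\pi^2\norm{v}_2^2/T^2$, and total variation is at most $H$. So no re-randomized offset or reflected Gaussian process is needed. As written, though, without this choice of $p$ and the cancellation argument above, your proposal does not establish \eqref{discard_proba}.
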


Iterating gives a full-coloring form:

\begin{lemma}[Full coloring]\label{full_coloring}
	Let $v_1,\ldots,v_m\in\R^D$ be any vectors satisfying $\norm{v_i}_2\le 1$, and let $	\varepsilon_1,\ldots,\varepsilon_m$ be independent fair signs.  For every $T>0$ there is a randomized algorithm producing signs $\varepsilon'_1,\ldots,\varepsilon'_m$ such that
	\begin{equation}\label{eq:full-coloring-bound}
		\norm*{\sum_{i=1}^m\varepsilon'_i v_i}_\infty
		\le T+C\log(2m),
	\end{equation}
	and
	\begin{equation}\label{eq:full-coloring-flips}
		\Prob{\varepsilon'_i\ne\varepsilon_i}\le \frac{\pi}{T}
		\qquad \text{for each } i.
	\end{equation}
\end{lemma}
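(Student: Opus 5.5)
We will prove Lemma~\ref{full_coloring} by applying Proposition~\ref{fisher} once with the threshold $T$ to all the vectors. We then recolor the discarded vectors with a deterministic full coloring whose error is only logarithmic. The signs of accepted vectors are kept, $\varepsilon'_i=\varepsilon_i$. Only discarded vectors can have their sign changed, so \eqref{eq:full-coloring-flips} follows directly from \eqref{discard_proba} and $\norm{v_i}_2\le 1$. With $r=m$, \eqref{in_cube} gives that the accepted part of the sum has $\ell_\infty$ norm at most $T$.

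It remains to assign signs to the discarded set $I$ so that $\norm{\sum_{i\in I}\varepsilon'_i v_i}_\infty\le C\log(2m)$. The triangle inequality then yields \eqref{eq:full-coloring-bound}. These signs may be arbitrary, because flipping them costs nothing beyond the already-bounded probability of being discarded.

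For the set $I$ we iterate Proposition~\ref{fisher} with a fixed threshold, say $T_0=2\pi$. At stage $t$, take the current set $I_t$ of still-uncolored vectors (with $I_0=I$) and draw fresh independent fair signs for them. Apply the proposition with threshold $T_0$ to the vectors in $I_t$ in any order. The accepted vectors get their fresh signs and contribute at most $T_0$ in $\ell_\infty$. The discarded ones form $I_{t+1}$. Each vector is discarded with probability at most $\pi/T_0=1/2$. Hence $\E|I_{t+1}|\le \tfrac12 \E|I_t|$, and after $L=\lceil \log_2 m\rceil+1$ stages $\E|I_L|\le 1/2$.

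There are two ways to finish. One is to keep iterating until $I_t$ is empty, which happens almost surely. Each stage adds at most $T_0$, but the number of stages is random, so bounding it in expectation alone is not enough. The other is to stop after $L$ stages and sign the leftover vectors greedily. Each of them has norm at most $1$, so they add at most $|I_L|$ to the $\ell_\infty$ norm.

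The main obstacle is that \eqref{eq:full-coloring-bound} is a deterministic bound, while the number of stages and $|I_L|$ are random. To handle this, we continue the iteration until each vector has been processed by $\log_2 m$ stages and every remaining set is empty. Alternatively, we use a deterministic fallback. Whenever a stage fails to at least halve-ish reduce $|I_t|$, we rerun that stage; the rerun succeeds with probability bounded below by Markov's inequality. This keeps the number of \emph{successful} stages at most $O(\log(2m))$. Since $|I_t|$ then shrinks geometrically, all vectors are colored after $O(\log(2m))$ stages. The total error is at most $T+T_0\cdot O(\log(2m))=T+C\log(2m)$, with almost-sure termination.

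Rerunning stages only changes signs of vectors outside the accepted set of the first step, so the flip bound \eqref{eq:full-coloring-flips} is unaffected. The genuine subtlety to check is exactly this: the first step's discard probability must be the only source of sign changes. Rerunning a stage among the later stages on $I$ must not feed back into the event $\{\varepsilon'_i\neq\varepsilon_i\}$ beyond $\{i\in I\}$. This holds by construction.
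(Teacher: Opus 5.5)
Your proof is correct and follows the paper's argument: one application of Proposition~\ref{fisher} with threshold $T$, which alone accounts for all sign changes, then repeated applications with threshold $2\pi$ to the discarded vectors, whose number halves in expectation at each round. The only difference is in getting a deterministic $O(\log(2m))$ bound on the number of rounds: the paper fixes a realization of the auxiliary signs in which at most half survive (one exists because the expectation is at most half), whereas you rerun a round until a Markov event such as $|I_{t+1}|<\tfrac34|I_t|$ occurs; both work, but your sentence about continuing ``until each vector has been processed by $\log_2 m$ stages'' is not an argument and should be dropped in favor of the rerun version.
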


\begin{proof}
Apply Proposition~\ref{fisher}.  Keep the original signs on the accepted vectors.  Color the discarded vectors by iterating the same procedure with threshold $2\pi$.  At each iteration the expected number of discarded vectors is at most half the number entering that iteration, so there is a deterministic choice of the auxiliary signs for which at most half survive.  Summing the contributions over the $O(\log(2m))$ iterations gives $C\log(2m)$.  Together with \eqref{in_cube}, this proves \eqref{eq:full-coloring-bound}.  A sign can change only if the corresponding vector was discarded in the first application, and \eqref{eq:full-coloring-flips} follows from \eqref{discard_proba}.
\end{proof}

We will now convert coloring to sampling; our argument can be seen as a randomized version of Beck's transference principle \cite{Beck, Aist-B-Nik}. 

\begin{proposition}[Iterated thinning]\label{summary_vectors}
Fix $0<\delta<1$.  Let $N=n2^k$ and let $v_1,\ldots,v_N\in\R^D$ satisfy $\|v_i\|_2\le 1$.  Let $S\subset[N]$ be obtained by selecting every index independently with probability $2^{-k}$.  Then there is a randomized algorithm producing a subset $Q \subset [N]$ such that
\begin{equation}\label{many_alpha=1}
	\E \abs{Q \cap S} \ge n(1-\delta)
\end{equation}
and 
\begin{equation}\label{telescope}
	\norm*{\frac1n\sum_{i \in Q} v_i - \frac1N\sum_{i=1}^N v_i}_\infty
	\le \frac{C}{n}\left(\frac{k}{\delta}+\log(2n)\right).	
\end{equation}
\end{proposition}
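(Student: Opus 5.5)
We will prove the statement by halving $k$ times, in the spirit of Beck's transference principle, with Lemma~\ref{full_coloring} used at each halving. The randomness of $S$ will be encoded as a sequence of independent fair signs. Realize $S$ as follows. Pair up the $N$ indices into $N/2$ fixed pairs, and for each pair flip a fair sign to decide which element is ``kept''. Then pair up the kept elements (in a way fixed in advance) and repeat. After $k$ rounds this yields a uniformly random set of size $n$ that is a $2^{-k}$-thinning. To get the model where each index is selected independently with probability $2^{-k}$, we instead let each index carry $k$ independent fair bits; $i\in S$ iff all its bits are $+$.

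Here is the concrete plan. Set $P_0=[N]$. At round $\ell=1,\dots,k$ we have a current set $P_{\ell-1}$ of size $N2^{-(\ell-1)}$, paired into pairs $\{a,b\}$. For the pair we form the vector $w=(v_a-v_b)$, with $\|w\|_2\le 2$, and the original sign $\varepsilon$ (the $\ell$-th bit deciding which of $a,b$ survives). We apply Lemma~\ref{full_coloring} with threshold $T_\ell$ to the vectors $w/2$, obtaining modified signs $\varepsilon'$, and let $P_\ell$ keep $a$ or $b$ according to $\varepsilon'$. Then
\[
\sum_{i\in P_{\ell-1}}v_i-2\sum_{i\in P_\ell}v_i=\pm\sum \varepsilon'w
\]
has sup-norm at most $2(T_\ell+C\log(2N2^{-\ell}))$. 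Put $Q=P_k$. Telescoping with weights $2^{\ell}/N$ gives
\[
\Big\|\tfrac1n\sum_{Q}v_i-\tfrac1N\sum_{[N]}v_i\Big\|_\infty\le \sum_{\ell=1}^k\frac{2^{\ell}}{N}\cdot 2\big(T_\ell+C\log(2n2^{k-\ell})\big).
\]
We choose the thresholds to decay geometrically backwards, e.g. $T_\ell=T$ constant; then $\sum_\ell 2^\ell/N\le 2/n$, so the threshold part is $O(T/n)$. The log part is $\frac1n\sum_j 2^{-j}\log(2n2^{j})=O(\log(2n)/n)$ with $j=k-\ell$. This yields \eqref{telescope} with $T$ in place of $k/\delta$.

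For \eqref{many_alpha=1}, an index $i\in S$ survives into $Q$ provided none of the $k$ sign decisions it participates in is flipped. The difficulty is that the pairings at later rounds must be defined on the \emph{modified} surviving set, while the original bits of $S$ are defined on the original pairing tree. To handle this, we fix the pairing tree in advance as a complete binary structure. At round $\ell$, each surviving element inherits the $\ell$-th bit attached to its current node, so that the original $S$ corresponds to the all-$+$ path and the signs at each round are fresh fair signs independent of the past. By \eqref{eq:full-coloring-flips} and a union bound, the probability that the path of an element of $S$ is altered is at most $k\pi/T$. Hence $\E|Q\cap S|\ge \E|S|(1-k\pi/T)=n(1-k\pi/T)$. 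Choosing $T=\pi k/\delta$ gives \eqref{many_alpha=1}, and then the threshold contribution is $O(k/(\delta n))$, as required.

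The main obstacle is exactly this bookkeeping: we must ensure the signs fed to Lemma~\ref{full_coloring} at each round are genuinely independent fair signs given the past, despite earlier modifications. We must also check that $S$'s elements map to a canonical surviving path, so that ``no flip along the path'' implies membership in $Q$. I would first try assigning the independent bits to tree nodes (positions) rather than to original indices. Under this scheme the original selection uses the same bits, and an element of $S$ remains in $Q$ if every comparison along its path uses the original sign.
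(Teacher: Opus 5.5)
Your discrepancy computation is fine, but the overlap bound \eqref{many_alpha=1} cannot come out of a halving-by-pairs scheme. One fair sign per pair is incompatible with the Bernoulli law of $S$, and your last paragraph does not resolve this. If the bits live on the nodes of a fixed pairing tree, the ``all-$+$'' set consists of exactly one index from each of the $n$ blocks of $2^k$ leaves. That is not the Bernoulli set of the statement (nor, contrary to your first paragraph, a uniform $n$-subset). No cleverer coupling can repair this. Whatever the signs are, your $Q$ contains exactly one index per block, so $|Q\cap S|$ is at most the number of blocks that meet $S$. Hence, for Bernoulli $S$,
\[
\E|Q\cap S|\le n\bigl(1-(1-2^{-k})^{2^k}\bigr)\le \tfrac34\, n ,
\]
and \eqref{many_alpha=1} fails for every $\delta<1/4$. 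More generally, any scheme with $|Q|=n$ deterministically gives $\E|Q\cap S|\le n-\E(n-|S|)_+$, which is smaller than $n$ by an amount of order $\sqrt n$. If instead the bits stay on the indices and a pair's sign is read off one of its two members, an index of $S$ whose partner also carries a $+$ in that round is eliminated with no flip at all. That happens with probability bounded below by a constant in every round.

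The missing idea, which is the paper's randomized transference step, is to drop the pairing altogether. In round $r$, apply Lemma~\ref{full_coloring} directly to $(v_i)_{i\in I(r-1)}$ with the per-index signs $\e_i(r)$, and set $I(r)=\{i\in I(r-1):\e'_i(r)=+1\}$. The identity $\sum_{i\in I(r-1)}v_i-2\sum_{i\in I(r)}v_i=-\sum_{i\in I(r-1)}\e'_i(r)v_i$ needs no pairing, so your telescoping goes through verbatim. The only price is that $|I(r)|$ is no longer exactly half of $|I(r-1)|$. The proposition does not require that; the size is restored later by adjoining the coordinate $A_0=[N]$ in Lemma~\ref{lem:bernoulli-sparse}. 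Now $S=\{i:\e_i(1)=\cdots=\e_i(k)=+1\}$ has exactly the Bernoulli law, and $i\in S$ lands in $Q$ exactly when none of its $k$ signs is flipped.

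Your union bound is also off by a factor $2$. Membership in $S$ forces $\e_i(r)=+1$, while \eqref{eq:full-coloring-flips} only bounds the flip probability averaged over the fair sign. Given $\e_i(r)=+1$ and the past, the flip probability is therefore at most $2\pi/T$, so you need $T=2\pi k/\delta$, as in the paper.

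As an aside, if you relabel your tree by an independent uniform random permutation, your node-bit construction does produce a uniform $n$-subset. With the factor-$2$ fix it would then give the fixed-size setting of Theorem~\ref{thm_sparse} directly, with $|Q|=n$ automatically. It still cannot give the Bernoulli statement here.
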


\begin{proof}
For each vector $v_i$, choose its independent ``fate'' 
\[
\big(\e_i(1),\ldots,\e_i(k)\big) \in \{\pm1\}^k
\]
uniformly at random, and realize
\[
S=\big\{ i: \e_i(1) = \cdots = \e_i(k) = +1\big\}.
\]
Start with $I(0) \coloneqq [N]$. In round $r \in [k]$, apply Lemma~\ref{full_coloring} to the vectors $(v_i)_{i\in I(r-1)}$ with input signs $\e_i(r)$ and threshold
\[
T \coloneqq \frac{2\pi k}{\delta}.
\]
Let $I(r)$ consist of the indices from $I(r-1)$ that receive output sign $\e'_i(r)=+1$. Then, for each round $r$,
\[
\sum_{i\in I(r-1)}v_i-2\sum_{i\in I(r)}v_i
=
-\sum_{i\in I(r-1)}\e'_i(r)v_i.
\]
Hence:
\begin{equation}\label{eq: one round}
	\norm*{\sum_{i\in I(r-1)}v_i-2\sum_{i\in I(r)}v_i}_\infty
	=
	\norm*{\sum_{i\in I(r-1)}\e'_i(r)v_i}_\infty
	\le T+C\log(2N).
\end{equation}
Telescoping gives
\[
\sum_{r=1}^k 2^{r-1}
\left(
	\sum_{i\in I(r-1)}v_i
	-
	2\sum_{i\in I(r)}v_i
\right) = \sum_{i=1}^N v_i
-
2^k\sum_{i\in I(k)}v_i,
\]
so we obtain from \eqref{eq: one round}:
\[
\norm*{\sum_{i=1}^N v_i-2^k\sum_{i\in I(k)}v_i}_\infty \le \sum_{r=1}^k 2^{r-1}\bigl(T+C\log(2N)\bigr) \le  
2^k \bigl(T+C\log(2N)\bigr).
\]
Put 
$$
Q \coloneqq I(k).
$$
Then, dividing by $N$, we conclude:
\[
\norm*{
\frac{1}{N}\sum_{i=1}^N v_i - \frac{1}{n}\sum_{i\in Q}v_i}_\infty \le 
\frac{1}{n}\bigl(T+C\log(2N)\bigr),
\]
and \eqref{telescope} follows after 
adjusting the constant \(C\). 

It remains to prove \eqref{many_alpha=1}. By construction, 
$$
Q \cap S = \big\{ i: \e'_i(r) = \e_i(r) = 1 \textrm{ for all } r \in [k] \big\}.
$$

To bound the expected cardinality of $Q \cap S$, let us address each round $r$ separately. Condition on the outcomes of all past rounds (i.e. any configuration of the input and output signs in the rounds $1,\ldots,r-1$ that leave $i \in I(r-1)$). Then, using Lemma~\ref{full_coloring} and the fairness of the sign $\e_i(r)$, we get
$$
\Prob[\big]{\e'_i(r) \ne \e_i(r) \given \text{past rounds and } \e_i(r)=+1}
\le 2 \, \Prob{\e'_i(r) \ne \e_i(r) \given \text{past rounds}}
\le \frac{2\pi}{T}=\frac{\delta}{k}.
$$
Equivalently,
$$
\Prob[\big]{\e'_i(r) = \e_i(r) \given \text{past rounds and } \e_i(r)=+1}
\ge 1 - \frac{\delta}{k}.
$$
From this, using the fairness of the sign again, we obtain
$$
\Prob[\big]{\e'_i(r) = \e_i(r) = 1 \given \text{past rounds}}
\ge \left(1-\frac{\delta}{k}\right) \cdot \frac12.
$$
Hence, telescoping the conditional probability we conclude that
$$
\Prob[\big]{\e'_i(r) = \e_i(r) = 1 \textrm{ for all } r \in [k]}
\ge \left(1-\frac{\delta}{k}\right)^k \cdot \frac1{2^k}
\ge 2^{-k} (1-\delta).
$$
Now sum over $i \in [N]$ to get
\[
\E \abs{Q \cap S} \ge N2^{-k}(1-\delta)=n(1-\delta). \qedhere
\]
\end{proof}

\section{Sparse families}\label{proof_thm_sparse}

We now pass from Proposition~\ref{summary_vectors} to Theorem~\ref{thm_sparse}. The proof has two parts: first we prove the result in the Bernoulli model, then we use coupling to pass to sampling without replacement.

\begin{lemma}[Sparse families: Bernoulli selection]\label{lem:bernoulli-sparse}
Fix \(0 < \delta < 1\). Let \(A_1,\ldots,A_D\) be an \(s\)-sparse family of subsets of $[N]$. Assume that $N$ has the form $N=n2^k$ for some positive integers $n$ and $k$. Let \(S \subset [N]\) be a random subset obtained by selecting each point independently with probability $2^{-k}$. Then there is a randomized algorithm that produces an $n$-element subset $Q\subset [N]$ such that
\begin{equation}\label{eq:bernoulli-overlap}
\E \abs{Q\cap S} \ge n(1-\delta)-\sqrt n
\end{equation}
and
\begin{equation}\label{eq:bernoulli-discrepancy}
	\abs*{ \frac{\abs{A_j \cap Q}}{n} - \frac{\abs{A_j}}{N}} 
	\le C\frac{\sqrt{s}}{n}\left(\frac{k}{\delta}+\log(2n)\right)
	\qquad \text{for each } j.
\end{equation}
\end{lemma}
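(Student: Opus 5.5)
We apply Proposition~\ref{summary_vectors} to the incidence vectors of the family. For each $i\in[N]$ set
\[
v_i \coloneqq \frac{1}{\sqrt{s}}\bigl(\one_{\{i\in A_1\}},\ldots,\one_{\{i\in A_D\}}\bigr)\in\R^D .
\]
By $s$-sparsity each $v_i$ has at most $s$ nonzero coordinates, each equal to $1/\sqrt s$, so $\norm{v_i}_2\le 1$. The $j$-th coordinate of $\frac1n\sum_{i\in Q'}v_i-\frac1N\sum_{i=1}^N v_i$ equals
\[
\frac{1}{\sqrt s}\left(\frac{\abs{A_j\cap Q'}}{n}-\frac{\abs{A_j}}{N}\right).
\]
Hence Proposition~\ref{summary_vectors}, with the Bernoulli$(2^{-k})$ sample $S$, gives a random set $Q'$ (namely $I(k)$) with $\E\abs{Q'\cap S}\ge n(1-\delta)$ and
\[
\abs*{\frac{\abs{A_j\cap Q'}}{n}-\frac{\abs{A_j}}{N}}\le C\frac{\sqrt s}{n}\left(\frac k\delta+\log(2n)\right)\qquad\text{for all } j.
\]
Here I replace $\log(2N)$ by $\log(2n)$, using $\log(2N)\le \log(2n)+k\le \log(2n)+k/\delta$.

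The remaining issue is that $Q'$ need not have exactly $n$ elements. To control $\abs{Q'}$, I add to the family the extra set $A_0=[N]$. The family stays $(s+1)$-sparse, and we can absorb $s+1\le 2s$ into the constant. The coordinate for $A_0$ then gives
\[
\bigl|\abs{Q'}-n\bigr|\le C\sqrt{s}\left(\frac k\delta+\log(2n)\right)\eqqcolon E .
\]

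Now modify $Q'$ to $Q$ with $\abs{Q}=n$. If $\abs{Q'}>n$, delete $\abs{Q'}-n$ elements, preferring elements outside $S$. If $\abs{Q'}<n$, add arbitrary elements. Adding or removing at most $E$ points changes each $\abs{A_j\cap Q}$ by at most $E$, so \eqref{eq:bernoulli-discrepancy} survives with a doubled constant.

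The main obstacle is the overlap bound \eqref{eq:bernoulli-overlap}. Removals can destroy elements of $Q'\cap S$, but only when $Q'\setminus S$ is too small to absorb them. In that case the number of removed $S$-elements is at most
\[
(\abs{Q'}-n)_+\le \bigl(\abs{Q'\cap S}-n\bigr)_+\le \bigl(\abs S-n\bigr)_+ ,
\]
because $\abs{Q'}-\abs{Q'\setminus S}=\abs{Q'\cap S}\le\abs S$.

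More precisely, the number of removed $S$-points is $\bigl(\abs{Q'}-n-\abs{Q'\setminus S}\bigr)_+=\bigl(\abs{Q'\cap S}-n\bigr)_+\le(\abs S-n)_+$. Since $\abs S\sim\Bin(N,2^{-k})$ has mean $n$ and variance at most $n$,
\[
\E(\abs S-n)_+\le \sqrt{\operatorname{Var}\abs S}\le\sqrt n .
\]
Therefore $\E\abs{Q\cap S}\ge \E\abs{Q'\cap S}-\sqrt n\ge n(1-\delta)-\sqrt n$, which is exactly \eqref{eq:bernoulli-overlap}. This explains the $\sqrt n$ term, and the plan does not need any concentration bound on $\abs{Q'}$ beyond the deterministic bound $E$ obtained above.
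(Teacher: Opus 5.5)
Your proposal is correct and follows the paper's proof step for step: you adjoin $A_0=[N]$, apply Proposition~\ref{summary_vectors} to the rescaled incidence vectors, use the $A_0$ coordinate to bound $\bigl||Q'|-n\bigr|$, pad or trim while removing points outside $S$ first, and bound the lost overlap by $\E(|S|-n)_+\le\sqrt n$. One slip: your first display asserts $(|Q'|-n)_+\le(|Q'\cap S|-n)_+$, which fails whenever $|Q'|>n$ and $Q'\not\subset S$; your ``more precisely'' count $(|Q'\cap S|-n)_+\le(|S|-n)_+$ is the correct one, and it is all the argument needs.
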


\begin{proof}
Adjoin $A_0=[N]$ to the family and associate to each $i \in [N]$ the incidence vector
\[
v_i=\bigl(\one_{\{i\in A_0\}},\ldots,\one_{\{i\in A_D\}}\bigr)\in\R^{D+1}.
\]
Because the original family is $s$-sparse,
\[
\|v_i\|_2\le \sqrt{s+1}\le \sqrt{2s}.
\]
Applying Proposition~\ref{summary_vectors} for the scaled vectors $v_i/\sqrt{2s}$, we obtain a subset $Q_0$ satisfying 
\begin{equation}\label{eq:Q0-discrepancy}
	\abs*{ \frac{\abs{A_j \cap Q_0}}{n} - \frac{\abs{A_j}}{N}} 
	\le C\frac{\sqrt{s}}{n}\left(\frac{k}{\delta}+\log(2n)\right)
	\quad \text{for each } j=0,\ldots,D.
\end{equation}
For $j=0$, this implies
\begin{equation}\label{eq:Q0-size}
\bigl||Q_0|-n\bigr|
\le C\sqrt{s}\left(\frac{k}{\delta}+\log(2n)\right).
\end{equation}
Moreover, \eqref{many_alpha=1} yields
\begin{equation}\label{eq:Q0-overlap}
\E|Q_0\cap S|\ge n(1-\delta).
\end{equation}

Modify $Q_0$ to have cardinality exactly $n$: add arbitrary points when $|Q_0|<n$; when $|Q_0|>n$, remove points outside $S$ first and then, only if necessary, remove points of $S$.  Call the resulting set $Q$.  By \eqref{eq:Q0-size}, this adjustment changes every normalized count by at most the right-hand side of \eqref{eq:Q0-discrepancy}, so \eqref{eq:bernoulli-discrepancy} follows after changing $C$.

The prescribed removal rule gives
\[
|Q\cap S|\ge |Q_0\cap S|-(|S|-n)_+.
\]
Since $|S|\sim\operatorname{Bin}(N,2^{-k})$ has mean $N2^{-k}=n$ and variance at most $n$,
\[
\E(\abs{S}-n)_+\le \E\abs[\big]{\abs{S}-n} \le \sqrt n.
\]
Combining this with \eqref{eq:Q0-overlap} proves \eqref{eq:bernoulli-overlap}.
\end{proof}

\begin{lemma}[Coupling Bernoulli and fixed-size sampling]\label{lem:depoissonization}
Let $S$ be a uniformly distributed $n$-element subset of $[N]$, and let $\xi\sim\operatorname{Bin}(N,2^{-k})$ be independent of $S$.  Construct $S'$ by adding $\xi-n$ uniformly chosen points of $[N]\setminus S$ when $\xi>n$, and by removing $n-\xi$ uniformly chosen points of $S$ when $\xi<n$.  Then $S'$ is a Bernoulli subset with inclusion probability $2^{-k}$, and
\begin{equation}\label{eq:coupling-distance}
\E|S\triangle S'|=\E|\xi-n|\le \sqrt n.
\end{equation}
\end{lemma}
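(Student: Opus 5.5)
The proof has two parts: a symmetry argument showing that $S'$ is a Bernoulli subset, and a direct computation of the symmetric difference.

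\emph{Distribution of $S'$.} First observe that $|S'|=\xi$ in every case. If $\xi>n$, we add $\xi-n$ points to the $n$ points of $S$. If $\xi<n$, we remove $n-\xi$ points. If $\xi=n$, nothing changes. Next, condition on $\xi=\ell$. The construction is invariant under permutations of $[N]$: $S$ is uniform, and the added or removed points are chosen uniformly from the complement of $S$ or from $S$, respectively. Hence the conditional law of $S'$ given $\xi=\ell$ is invariant under all permutations of $[N]$. It is supported on $\ell$-element subsets, and the only permutation-invariant law on such subsets is the uniform one. So, given $\xi=\ell$, $S'$ is a uniform $\ell$-subset, and $\xi\sim\operatorname{Bin}(N,2^{-k})$. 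For any fixed $B\subset[N]$ with $|B|=\ell$,
\[
\Prob{S'=B}=\binom{N}{\ell}2^{-k\ell}(1-2^{-k})^{N-\ell}\binom{N}{\ell}^{-1}=2^{-k\ell}(1-2^{-k})^{N-\ell}.
\]
This is exactly the law of a Bernoulli subset with inclusion probability $2^{-k}$.

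\emph{Symmetric difference.} When $\xi\ge n$ we have $S\subset S'$, so $|S\triangle S'|=\xi-n$. When $\xi<n$ we have $S'\subset S$, so $|S\triangle S'|=n-\xi$. Either way $|S\triangle S'|=|\xi-n|$, and taking expectations gives the equality in \eqref{eq:coupling-distance}. For the inequality, $\E\xi=N2^{-k}=n$, so Cauchy--Schwarz gives
\[
\E|\xi-n|\le\sqrt{\operatorname{Var}\xi}=\sqrt{N2^{-k}(1-2^{-k})}\le\sqrt n.
\]

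The only step that needs care is the symmetry argument for uniformity given $\xi$. Everything else is a direct computation.
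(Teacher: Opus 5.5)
Your proof is correct and matches the paper's argument. Conditional on $\xi$ (equivalently on $|S'|$), the construction is permutation-invariant, so $S'$ is uniform on $\ell$-subsets and hence has the Bernoulli law; then $|S\triangle S'|=|\xi-n|$, and Cauchy--Schwarz with $\operatorname{Var}\xi\le n$ gives the bound. You additionally write out the explicit product formula for the law of $S'$ and make explicit the use of $N=n2^k$ to get $\E\xi=n$, which the paper leaves tacit.
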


\begin{proof}
Conditional on $|S'|=r$, the construction is invariant under every permutation of $[N]$, hence $S'$ is uniform among the $r$-element subsets.  Since $|S'|=\xi\sim\operatorname{Bin}(N,2^{-k})$, it follows that $S'$ has the Bernoulli law, as claimed. The construction changes exactly $|\xi-n|$ points, and Cauchy--Schwarz gives
\[
\E|\xi-n|\le \sqrt{\operatorname{Var}(\xi)}\le \sqrt n.
\qedhere \] 
\end{proof}

\medskip

\begin{proof}[Proof of Theorem~\ref{thm_sparse}]
Couple the given uniform $n$-element set $S$ with a Bernoulli set $S'$ as in Lemma~\ref{lem:depoissonization}.  Apply Lemma~\ref{lem:bernoulli-sparse} to $S'$ and let $Q$ be the resulting $n$-element set.  The discrepancy estimate is exactly \eqref{eq:bernoulli-discrepancy}.  Furthermore,
\[
|Q\cap S|\ge |Q\cap S'|-|S\triangle S'|.
\]
Taking expectations and using \eqref{eq:bernoulli-overlap} and \eqref{eq:coupling-distance}, we obtain
\[
\E|Q\cap S|
\ge n(1-\delta)-2\sqrt n.
\]
This proves the theorem.
\end{proof}

\section{Proof of Theorem~\ref{thm_main}}\label{proof_thm_main}

We will deduce the main theorem from Theorem~\ref{thm_sparse}. 

Let's start with a few reductions as in \cite{Sm-Versh-2}. First, we can assume that the cumulative distribution functions of all marginals of $\mu$ are strictly increasing. This follows by mixing the original distribution with a Gaussian: sample from the original distribution with probability $p$ and from the standard Gaussian distribution with probability $1-p$; then take $p \to 1$. 

Moreover, it is enough to treat measures on $[0,1]^d$. To see this, consider a fixed increasing homeomorphism from $\R$ onto $(0,1)$, applied coordinatewise.  

Finally, the following observation will allow us to focus on measures with uniform marginals: 

\begin{lemma}[Uniformization] \label{lem:uniformization}
	Let $X$ be a random vector with distribution $\mu$ on $[0,1]^d$. Let $F_k$ denote the cumulative distribution function of the $k$th marginal of $\mu$. Assume all $F_k$ are strictly increasing on $[0,1]$. Then there exists a random vector $\widehat X$ whose marginals are uniform on $[0,1]$ and such that, simultaneously for every anchored box
	\begin{equation}	\label{eq: anchored box}
		B=[0,t_1]\times\cdots\times[0,t_d], 
		\qquad (t_1,\ldots,t_d)\in\R^d,
	\end{equation}
	we have
	\[
	X\in B \Longleftrightarrow \widehat X\in\widehat B,
	\]
	where $\widehat B=[0,F_1(t_1)]\times\cdots\times[0,F_d(t_d)]$.
\end{lemma}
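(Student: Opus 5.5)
The natural choice is the coordinatewise probability integral transform. We set
\[
\widehat X=\bigl(F_1(X_1),\ldots,F_d(X_d)\bigr),
\]
where $X=(X_1,\ldots,X_d)$. The proof then has two parts: the marginals of $\widehat X$ are uniform, and the events $\{X\in B\}$ and $\{\widehat X\in\widehat B\}$ coincide pointwise, for all anchored boxes at once.

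\textbf{Equivalence of events.} Since each $F_k$ is strictly increasing on $[0,1]$, for $x\in[0,1]$ we have $x\le t_k$ if and only if $F_k(x)\le F_k(t_k)$. One must be a little careful when $t_k$ falls outside $[0,1]$. If $t_k\ge 1$, we interpret $F_k(t_k)=1$, and both conditions hold automatically. If $t_k<0$, then $F_k(t_k)=0$ and the box $[0,t_k]$ is empty, so we need $F_k(X_k)>0$ almost surely; we will come back to this point. Intersecting over $k$ gives $X\in B\iff \widehat X\in\widehat B$ for every box simultaneously, because the identity holds pointwise on the sample space, not merely in distribution.

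\textbf{Uniform marginals.} We show $\Prob{F_k(X_k)\le u}=u$ for each $u\in[0,1]$. Strict monotonicity alone does not make $F_k$ continuous, because atoms are still allowed. In the reductions above, however, the measure was mixed with a Gaussian and then pushed through a homeomorphism. Such a mixture can still have atoms coming from the original measure, so continuity is genuinely an issue. I expect this to be the main obstacle.

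The standard fix is randomization. Let $U_k$ be independent uniform variables on $[0,1]$, and set
\[
\widehat X_k=F_k(X_k^-)+U_k\bigl(F_k(X_k)-F_k(X_k^-)\bigr).
\]
This variable is uniform on $[0,1]$ by the usual distributional transform argument: the atom of $F_k$ at $x$ is spread uniformly over the jump interval $[F_k(x^-),F_k(x)]$.

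With this definition, the equivalence step needs rechecking. For $x\le t$ we have $\widehat X_k\le F_k(x)\le F_k(t)$. For $x>t$, strict monotonicity gives $F_k(x^-)\ge F_k(t)$, with a strict inequality whenever $F_k$ increases strictly between $t$ and $x$. Then $\widehat X_k\ge F_k(x^-)\ge F_k(t)$, and equality occurs only on a null event, namely $U_k=0$ or a boundary case of probability zero. So the equivalence holds almost surely, simultaneously over all $t$.

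To get it surely rather than almost surely, I would exclude an explicit null set once. For instance, we can require $U_k\in(0,1]$ and check that $F_k(x^-)>F_k(t)$ whenever $x>t$. This follows from strict monotonicity, since $F_k(x^-)\ge F_k\bigl((t+x)/2\bigr)>F_k(t)$. Hence for $x>t$ we get $\widehat X_k>F_k(t)$ outright, with no null set at all.

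The remaining care point is $t_k<0$. There $B$ is empty, while $\widehat B=\{0\}$ in the $k$th coordinate. Since $\widehat X_k\ge F_k(X_k^-)$ and $F_k(X_k^-)>0$ unless $X_k=0$, the only problematic case is $X_k=0$ together with a zero value, and taking $U_k\in(0,1]$ together with $F_k(0)>0$ handles it. If $F_k(0)=0$, the point $X_k=0$ is a null event. Assembling coordinates gives the lemma.
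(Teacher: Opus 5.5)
Your final construction $\widehat X_k=F_k(X_k^-)+U_k\bigl(F_k(X_k)-F_k(X_k^-)\bigr)$ is exactly the paper's randomized integral transform. Your equivalence check, $F_k(x^-)\ge F_k\bigl((t+x)/2\bigr)>F_k(t)$ for $x>t$, is the paper's argument with the intermediate point $y=(t+x)/2$ made explicit, so the proposal is correct and follows the paper. Your separate treatment of $t_k<0$ covers the degenerate case $X_k=0$, which the paper's one-line argument passes over; there the equivalence holds only almost surely, as you note, and this is harmless for the application.
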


\begin{proof}
	The argument is the same as in \cite[Section~3]{Sm-Versh-2}, and we include it here for completeness. Let $U_1,\ldots,U_d$ be independent $\operatorname{Unif}[0,1]$ random variables, independent of $X$. Define the (randomized) integral transform
	\begin{equation}	\label{eq: integral transform}
		\widehat X(k)
		= U_k \cdot F_k(X(k)) + (1-U_k) \cdot F_k(X(k)^-), 
		\qquad k=1,\ldots,d,
	\end{equation}
    where $F_k(a^-) = \lim_{x \to a^-} F_k(x)$. Then $\widehat X(k)$ is uniform on $[0,1]$ for each coordinate $k$.
	
	If $X \in B$ then $\widehat X\in\widehat B$, since $\widehat X(k) \le F_k(X(k))$ by construction. If $X \not\in B$, then $X(k) > t_k$ for some $k$. Hence, there is $y$ with $t_k < y < X(k)$. Then 
	$$
	F_k(t_k) < F_k(y) \le F_k(X(k)^-) \le \widehat X(k),
	$$
	where the first two bounds follow by strict monotonicity of $F_k$, and the last one follows from \eqref{eq: integral transform} if one replaces $F_k(X(k))$ with the smaller value $F_k(X(k)^-)$. Thus, $\widehat{X} \not\in \widehat{B}$. 
\end{proof}

Now we will prove a preliminary version of Theorem~\ref{thm_main}, where the ground measure $\mu$ is replaced with a very large iid sample from $\mu$. 

\begin{proposition}[Sampling from a large sample]\label{prop_sparse_2}
	Fix $0<\delta<1$. Assume that $N$ has the form $N=n2^k$ for some positive integers $n$ and $k$. Let $Z_1,\ldots,Z_N$ be iid random variables taking values in $[0,1]^d$. Let $S\subset[N]$ be a random $n$-element subset, chosen uniformly without replacement, independently of the sample $Z_i$. Then there is a randomized algorithm that produces an $n$-element subset $Q\subset[N]$ such that
	\begin{equation}\label{eq:prop-overlap}
		\E \abs*{Q \cap S} \ge n(1-\delta)-2\sqrt n,
	\end{equation}
	and
	\begin{equation}\label{eq:prop-discrepancy}
		\E\sup_B \abs*{ \frac1n\sum_{i\in Q}\one_{\{Z_i\in B\}} - \frac1N\sum_{i=1}^N\one_{\{Z_i\in B\}}}
		\le \frac{C^d \log^{3d/2}(2n)}{n}
		\left(\frac{k}{\delta}+\log(2n)\right),
	\end{equation}
where the supremum is over anchored boxes \eqref{eq: anchored box}.
\end{proposition}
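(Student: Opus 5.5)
Condition on the sample $Z_1,\ldots,Z_N$ and apply Theorem~\ref{thm_sparse} to an $s$-sparse family with $s\approx\log^{d}(2n)$, built from dyadic boxes. Since $S$ is independent of the $Z_i$, after conditioning it is still a uniform $n$-element subset of $[N]$. The overlap bound \eqref{eq:prop-overlap} then holds conditionally, and hence after averaging over the $Z_i$. All the work goes into the discrepancy estimate \eqref{eq:prop-discrepancy}.

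\textbf{Family and main term.} Fix a resolution $M=2^L$ with $L\approx\log_2(2n)$. For each coordinate take the dyadic intervals of lengths $2^{-\ell}$, $0\le\ell\le L$. Any interval $[0,t]$ with $t$ on the grid $2^{-L}\mathbb{Z}$ is a disjoint union of at most $L+1$ such intervals, one per level. For each tuple of levels $(\ell_1,\ldots,\ell_d)$, the products of dyadic intervals of those levels partition $[0,1]^d$. There are $(L+1)^d$ level tuples, so the family of all such dyadic boxes, pulled back to indices via $A_R=\{i: Z_i\in R\}$, is $s$-sparse with $s=(L+1)^d$. Each anchored box $B$ with grid corners decomposes into at most $(L+1)^d$ dyadic boxes. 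Theorem~\ref{thm_sparse} then bounds its discrepancy by
\[
(L+1)^d\cdot C\frac{(L+1)^{d/2}}{n}\Big(\frac{k}{\delta}+\log(2n)\Big)
=\frac{C^d\log^{3d/2}(2n)}{n}\Big(\frac{k}{\delta}+\log(2n)\Big),
\]
which is exactly the claimed form. Because the marginals are uniform, one may if needed first apply Lemma~\ref{lem:uniformization}.

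\textbf{Off-grid boxes (the main obstacle).} A general anchored box $B$ is sandwiched as $B^-\subset B\subset B^+$ between grid boxes whose difference lies in a union of $d$ slabs, each of width $2^{-L}$. The discrepancy of $B$ is at most the grid discrepancy plus the mass of the slabs under the two empirical measures. A slab is itself a dyadic box (level $L$ in one coordinate, level $0$ in the rest), so its $Q$-count is controlled by the bound above, up to the same error, through its $N$-average. It therefore suffices to bound the $N$-empirical mass of the thin slabs. Under the uniform-marginal reduction each slab has $\mu$-mass $2^{-L}\le 1/n$. The maximum over the $dM$ slabs of the empirical mass is at most $2^{-L}+O(\sqrt{2^{-L}\log(dM)/N}+\log(dM)/N)$ in expectation, which is $O(d\log(2n)/n)$ for $N\ge n$.

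This is where care is needed. The uniform-marginal reduction must be made in expectation over the $Z_i$, with no a.s.\ claim; if the marginals are not uniform, one uses grids adapted to the quantiles of the marginals instead of $2^{-L}\mathbb{Z}$. Sandwiching in all coordinates then yields \eqref{eq:prop-discrepancy} after adjusting $C$.
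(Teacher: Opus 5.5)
Your proposal is correct and follows essentially the paper's route. You apply Theorem~\ref{thm_sparse}, conditionally on the $Z_i$, to the $O(\log^d(2n))$-sparse family of dyadic boxes; split grid-cornered anchored boxes into at most $(L+1)^d$ dyadic pieces; and control the off-grid error through $d$ thin slabs via a maximal binomial bound. Your two-sided sandwich $B^-\subset B\subset B^+$ is a slightly cleaner variant of the paper's one-sided rounding, since it needs only the $N$-empirical mass of the slabs and not their $Q$-counts. The one point to state cleanly is that the marginals are not given as uniform: the reduction is exactly Lemma~\ref{lem:uniformization} applied independently to each $Z_i$, which keeps the $\widehat Z_i$ iid and independent of $S$ and preserves membership in anchored boxes. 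Your quantile-grid fallback is therefore unnecessary, and it would in any case need extra care at atoms of the marginals, since a slab containing an atom of mass $>2^{-L}$ cannot be light.
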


\begin{proof}
Let $\mu$ denote the law of $Z_i$. Applying the uniformization \eqref{eq: integral transform} independently to the sample points $Z_i$, we see that it is enough to consider the case in which every marginal of $\mu$ is uniform.

\smallskip

{\em Step 1. Dyadic boxes.}
Fix the resolution parameter
\begin{equation}	\label{eq: L}
	L=\lceil \log_2 (2n) \rceil.
\end{equation}  
A dyadic interval is an interval in [0,1] of the form 
$$
\big[ j2^{-\ell}, (j+1)2^{-\ell} \big]
\quad \text{for some } \ell \in \{0,1,\ldots,L-1\}, \; j \in \{0,1,\ldots,2^\ell-1\}.
$$
A dyadic box is a product of $d$ dyadic intervals. 

Almost surely no sample point $Z_i$ lies on the boundary of any dyadic box. Each point belongs to at most $L^d$ dyadic boxes, so the traces
\[
A_B \coloneqq \{i\in[N]:\; Z_i\in B\},\qquad B \text{ is a dyadic box},
\]
form an $L^d$-sparse family on $[N]$.

Apply Theorem~\ref{thm_sparse} to this family and to $S$.  We obtain $Q$ satisfying \eqref{eq:prop-overlap} and the discrepancy bound
\begin{equation}\label{eq:dyadic-disc}
	\disc(B) \le C\frac{L^{d/2}}{n} \left(\frac{k}{\delta}+\log(2n)\right)
	\quad \text{for any dyadic box } B,
\end{equation}
where 
\begin{equation}	\label{eq: discrepancy def}
	\disc(B) \coloneqq \abs*{ \frac1n\sum_{i\in Q}\one_{\{Z_i\in B\}} -\frac1N\sum_{i=1}^N\one_{\{Z_i\in B\}} }.
\end{equation}

\smallskip

{\em Step 2. Lattice boxes.}
A lattice interval is an interval of the type
$$
\big[ 0, j 2^{-L+1} \big]
\quad \text{for some } j \in \{0,1,\ldots,2^{L-1}\}.
$$
A lattice box is the product of $d$ lattice intervals.

Any lattice interval can be partitioned into at most $L$ dyadic intervals. Thus, any lattice box can be partitioned into at most $L^d$ dyadic boxes. Therefore, summing $L^d$ bounds \eqref{eq:dyadic-disc} by triangle inequality, we conclude
\begin{equation}	\label{eq: discrepancy lattice box}
\disc(B) 
\le C\frac{L^{3d/2}}{n} \left(\frac{k}{\delta}+\log(2n)\right)
	\quad \text{for any lattice box } B.
\end{equation}

\smallskip

{\em Step 3. Slices.}
We are about to approximate a general box by a dyadic box, with the difference between the two covered by a union of $d$ thin slices. To control the approximation error, we will need to bound the number of points $Z_i$ that can fall into a thin slice. Let's do that first.

A slice is the product of one interval of the type
\begin{equation}	\label{eq: slice}
	\big[ j2^{-L+1}, (j+1)2^{-L+1} \big]
	\quad \text{for some } j \in \{0,1,\ldots,2^{L-1}-1\},
\end{equation}
and $d-1$ intervals $[0,1]$; the interval can appear as any of the $d$ factors.

Since the sample points $Z_i$ have uniform marginals on $[0,1]$, the number of points falling into any slice $S$, 
$$
\sum_{i=1}^N \one_{\{Z_i \in S\}},
$$ 
has binomial distribution $\Bin(N, 2^{-L+1})$. Using the standard exponential moment method one can check that the expected maximum of $m$ random variables with distribution $\Bin(n,p)$ is bounded by $C(np + \log m)$; see e.g. \cite[Second proof of Proposition~2.7.6]{vershynin2026high}.
In our case, there are a total of $d2^{L-1}$ slices, so 
\begin{equation}	\label{eq: slice count}
	\E \max_{S: \text{ slice}} \sum_{i=1}^N \one_{\{Z_i \in S\}}
	\le C \Big( N2^{-L} + \log(d2^L) \Big).
\end{equation}

\medskip

{\em Step 4. Approximation.}
Any interval $B = [0,t] \subset [0,1]$ can be approximated from below by some lattice interval $B' \subset B$ by rounding $t$ down to the nearest multiple of $2^{-L+1}$; this way $B \setminus B'$ lies in some dyadic interval of the type \eqref{eq: slice}. Therefore, a general box 
$$
B = [0,t_1] \times \cdots \times [0,t_d]
$$
can be approximated from below by a lattice box $B' \subset B$ so that $B \setminus B'$ lies in a union of $d$ slices. 

By definition of discrepancy \eqref{eq: discrepancy def} and triangle inequality, we have 
\begin{equation}	\label{eq: discrepancy decomposed}
	\disc(B) \le \disc(B') + \disc(B \setminus B'), 
\end{equation}
and
\begin{align*}
	\disc(B \setminus B')
	&\le \frac1n\sum_{i\in Q}\one_{\{Z_i\in B \setminus B'\}} + \frac1N\sum_{i=1}^N\one_{\{Z_i\in B \setminus B'\}} \\
	&\le d \max_{S: \text{ slice}} \left( \frac1n\sum_{i\in Q}\one_{\{Z_i\in S\}} + \frac1N\sum_{i=1}^N\one_{\{Z_i\in S\}} \right)
		\quad \text{(since $B \setminus B'$ is covered by $d$ slices)} \\
	&\le d \max_{S: \text{ slice}} \left( \frac2N\sum_{i=1}^N\one_{\{Z_i\in S\}} + \disc(S) \right)
		\quad \text{(by definition of discrepancy)}.
\end{align*}
Putting this into \eqref{eq: discrepancy decomposed} and taking maximum over all anchored boxes, and then taking expectation, we obtain:
$$
\E \max_{B: \text{ box}} \disc(B) 
\le \underbrace{\E \max_{B': \text{ lattice box}} \disc(B')}_{E_1}
	+ \underbrace{\frac{2d}{N} \E \max_{S: \text{ slice}} \sum_{i=1}^N\one_{\{Z_i\in S\}}}_{E_2}
	+ \underbrace{d \E \max_{S: \text{ slice}} \disc(S)}_{E_3}.
$$
We can use \eqref{eq: discrepancy lattice box} to bound $E_1$, \eqref{eq: slice count} to bound $E_2$, and, since a slice is a dyadic box, \eqref{eq:dyadic-disc} can be used to bound $E_3$. 

Recalling the choice of the resolution parameter $L = \lceil \log_2 (2n) \rceil$ we made in \eqref{eq: L} and taking into account that $n>1$ (otherwise the conclusion is trivial), one can check that the bound for $E_1$ dominates both bounds for $E_2$ and $E_3$ (up to a factor $C^d)$, and it gives the right hand side of \eqref{eq:prop-discrepancy}.
\end{proof}

\bigskip

\begin{proof}[Proof of Theorem~\ref{thm_main}]
As we mentioned in the beginning of this section, we can assume that the measure $\mu$ is supported on $[0,1]^d$. Set
\[
\delta:=\frac{m}{3n}.
\]
If $\delta\le n^{-1/2}$, leave the sample unchanged.  Then \eqref{eq:VC} with $N=n$ gives
\[
\E \sup_B \abs*{\mu_n^X(B)-\mu(B)}
\le C\sqrt{\frac dn}
\le \frac{C\sqrt{d}}{\delta n}
\le \frac{3C\sqrt{d}}{m}.
\]
So, in this case we obtain a stronger conclusion than claimed in Theorem~\ref{thm_main}.

Assume now that $\delta>n^{-1/2}$.  Let
\begin{equation}	\label{eq: kN}
	k \coloneqq \lceil \log_2 (2n) \rceil,
	\qquad N \coloneqq n2^k.
\end{equation}
Let $Z_1,\ldots,Z_N$ be iid random variables with law $\mu$. Realize $X_1, \ldots, X_n$ as a random subsample of $Z_i$, i.e. set 
\begin{equation}    \label{eq: X-sample}
\{X_1,\ldots,X_n\} \coloneqq \{ Z_i: \; i \in S \},
\end{equation}
where $S\subset[N]$ is a uniformly random $n$-element subset independent of the full sample. 

Apply Proposition~\ref{prop_sparse_2} and obtain an $n$-element set $Q$. Set\footnote{Formally, we fix the canonical order of the $X$-sample in \eqref{eq: X-sample} and choose the order in the $Y$-sample \eqref{eq: Y-sample} to ensure that the overlap between $S$ and $Q$ translates to overlap between the samples. For example, if $S=\{5,7,8\}$ and $Q=\{7,8,9\}$, then we set $(X_1,X_2,X_3)=(Z_5,Z_7,Z_8)$ and $(Y_1,Y_2,Y_3)=(Z_9,Z_7,Z_8)$.}
\begin{equation}    \label{eq: Y-sample}
\{Y_1,\ldots,Y_n\} \coloneqq \{ Z_i: \; i \in Q \}.
\end{equation}
By \eqref{eq:prop-overlap} and $\delta>n^{-1/2}$,
\[
\E\#\{i:Y_i\ne X_i\}
= \E \abs{S \setminus Q} = 
n - \E \abs{S \cap Q} \le n\delta+2\sqrt n
\le 3n\delta=m.
\]
Finally, by the triangle inequality, Proposition~\ref{prop_sparse_2}, and \eqref{eq:VC} for the full sample,
\begin{align*}
	\E	\sup_B	\abs*{\mu_n^Y(B)-\mu(B)}
	&\le \E	\sup_B	\abs*{\mu_n^Y(B)-\mu_N^Z(B)}
		+ \E	\sup_B	\abs*{\mu_N^Z(B)-\mu(B)} \\
	&\le \frac{C^d \log^{3d/2}(2n)}{n}
		\left(\frac{k}{\delta}+\log(2n)\right) + \sqrt{\frac dN}.
\end{align*}
Due to the choice of $k$ and $N$ in \eqref{eq: kN}, we have $N \ge n^2$, and thus we conclude that 
$$
\E	\sup_B	\abs*{\mu_n^Y(B)-\mu(B)}
\le \frac{C_1^d \log^{3d/2+1}(2n)}{m}
$$
by adjusting an absolute constant $C_1$. 
\end{proof}

\bibliographystyle{plain}
\bibliography{ref}

\end{document}